\def\FULLVERSION{}
\journal{somewhere}
\journal{Journal of Mathematical Analysis and Applications}
\newcommand{\range}[2]{\in\{#1,\dots,#2\}}
\newtheorem{theorem}{Theorem}
\newtheorem*{theorem*}{Theorem}
\newtheorem*{conjecture*}{Conjecture}
\begin{document}

\begin{frontmatter}{}

\title{Cake-Cutting with Different Entitlements: How Many Cuts are Needed?}
\author{Erel Segal-Halevi}
\address{Ariel University, Ariel 40700, Israel. erelsgl@gmail.com}
\begin{abstract}
A cake has to be divided fairly among $n$ agents. When all agents have equal entitlements, it is known that such a division can be implemented with $n-1$ cuts. 
When agents may have different entitlements, the paper shows that at least $2 n -2$ cuts may be necessary, and $O(n\log n)$ cuts are always sufficient.

\textbf{JEL classification}: D63  % ELS
\end{abstract}

% ELS:
\begin{keyword}
Fair cake-cutting 
\sep Unequal shares
\sep Asymmetric agents
\sep Connected components
\sep Land division
\end{keyword}
\end{frontmatter}{}

\section{Introduction}
The \emph{fair cake-cutting} problem involves a continuous resource dubbed ``cake'' that has to be divided among several agents with different value measures on the cake. 
The problem was introduced by \citet{Steinhaus1948Problem} and inspired hundreds of research papers and several books \citep{Brams1996Fair,Robertson1998CakeCutting,Moulin2004Fair,Brams2007Mathematics}. 
See 
\ifdefined\FULLVERSION
\cite{Procaccia2015Cake,Branzei2015Computational,Lindner2016,Brams2017FairDivision,SegalHalevi2017Phd}
\else
\cite{Procaccia2015Cake,Branzei2015Computational,Lindner2016,Brams2017FairDivision}
\fi
for recent surveys.

An important requirement in dividing a cake is to minimize the number of \emph{cuts} used to implement the division%
\ifdefined\FULLVERSION
(see \cite{segalhalevi2018archipelago} and the references therein)%
\fi
.
When a cake is divided among $n$ people, it would be ideal to make $n-1$ cuts, such that each person receives a single connected piece. Making too many cuts might result in each person receiving a bunch of crumbs. Connectivity is even more important when the ``cake'' represents a resource such as land or time. When a land-estate is divided among heirs, a plot made of many disconnected pieces cannot be efficiently built upon or cultivated. Similarly, when the usage-time of a vacation-house is divided among its owners, having many disconnected time-intervals is practically useless.

In the simplest variant of the cake-cutting problem, all agents have equal entitlements. 
The common fairness criterion in this case is that each agent should receive a piece that is worth for him/her at least $1/n$ of the total cake value (a requirement called \emph{proportionality} or \emph{fair-share guarantee}). \citet{Steinhaus1948Problem} already proved that a proportional cake division can be implemented using $n-1$ cuts, which is obviously the smallest number of cuts needed for making $n$ pieces.

However, in many division problems, agents have different entitlements. As an example, consider a land-estate owned by a partnership in which Alice's share is 2/5 and George's share is 3/5. If the partnership is dissolved, then Alice is entitled to 2/5 and George is entitled to 3/5 of the land-estate. As another example, consider a vacation-house bought by several friends, each of whom invested a different sum of money towards the purchase. The time in which the house is used should be divided among the friends in proportion to their investment. 

In general, in the \emph{cake-cutting with entitlements} problem, each agent $i$ has an entitlement $t_i$ such that the sum of all entitlements is $1$. A division is called \emph{$t$-proportional} if each agent $i$ values his/her share as at least a fraction $t_i$ of the total cake value. Proportionality is the special case of $t$-proportionality in which $t_i=1/n$ for every agent $i$. The present paper studies the following question:

\begin{quote}
\emph{How many cuts are required to implement a $t$-proportional allocation among $n$ agents?}
\end{quote}

A first solution that comes to mind is  \emph{agent cloning}. In the example above, create 2 clones of Alice and 3 clones of George and find a proportional division among the clones. Since each clone receives at least $1/5$ of the total cake value, each original agent receives at least his/her entitlement. However, this solution might result in a large number of cuts. In particular, if the common denominator of all agents' entitlements is $D$ then agent-cloning might require up to $D-1$ cuts. 
%If the entitlements are irrational then the number of cuts is unbounded.

Several authors have tried to improve on the agent-cloning method with respect to \emph{computational complexity} --- the number of evaluation queries and intermediate cut-marks required to find a proportional division. See
\citet{McAvaney1992Ramsey}, 
\citet{barbanel1996game},
\citet{Robertson1997Extensions}, \citet[pages 36-–44]{Robertson1998CakeCutting},
\citet{Brams2011DivideandConquer}.
The most recent advancement in this front is by \citet{cseh2018complexity}. After summarizing the computational complexity of previous algorithms, they present a faster one that requires $2(n-1)\lceil\log_2{D}\rceil$ queries, where $D$ is the smallest common denominator of the entitlements. They prove that this number is asymptotically optimal. 

In contrast, the present paper focuses on the question of how many cuts are required to implement the \emph{final} division, regardless of how this division is computed.
I am aware of two previous studies of this question:  \citet{Brams2011DivideandConquer} prove that $n-1$ cuts may be insufficient, and \citet{Shishido1999MarkChooseCut} prove that $2 \cdot 3^{n-1}$ cuts are always sufficient, and claim in their concluding remarks that the number can be improved to $O(n^2)$ cuts.
This paper shows two new bounds for this question.
\begin{itemize}
\item Lower bound: $2 n -2$ cuts might be necessary (Section \ref{sec:lower}).
\item Upper bound: 
$2 n \log_2{\widehat{n}} - 2 \widehat{n} + 2$ cuts are always sufficient, where $\widehat{n} := 2^{\lceil\log_2{n}\rceil}=n$ rounded up to the nearest power of two (Section \ref{sec:upper}).
\end{itemize}
The bounds coincide when $n=2$ at 2 cuts,
but they diverge when $n=3,4,5...$, where the lower bound is $4,6,8...$ and the upper bound is $6,10,16...$.%
\ifdefined\FULLVERSION
\footnote{
The situation of agents with different entitlements has been studied also for allocation of indivisible items
\citep{Babaioff2017Competitive,Farhadi2017Fair,SegalHalevi2018CEFAI}.
}
\fi

\section{Definitions}
There is a cake $C$, represented by an interval (a connected subset of $\mathbb{R}$).
There is a set $N$ of $n$ agents, represented by $n$ nonatomic value-measures $(V_i)_{i\in N}$ on $C$.
There is a vector $t$ of length $n$ where each element $t_i$ represents the entitlement of agent $i$. The vector is normalized such that $\sum_{i\in N} t_i =1$.

The goal is to find a partition of $C$ among the agents, $(X_i)_{i\in N}$, where the $X_i$ are pairwise-disjoint and $\cup_{i\in N} X_i = C$. The partition should be \emph{$t$-proportional}, i.e, for every agent $i\in N$:
\begin{align*}
V_i(X_i) \geq t_i \cdot V_i(C).
\end{align*}

\section{Lower Bound}
\label{sec:lower}
\begin{theorem}
\label{thm:lower}
For every $n\geq 2$, there exists a set of $n$ value measures and an entitlement-vector $t$ such that at least $2 n-2$ cuts are required for a $t$-proportional division.
\end{theorem}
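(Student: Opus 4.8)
The plan is to exhibit an explicit instance. Take the cake to be $[0,1]$ and fix a tiny $\epsilon>0$ and an even tinier width $w>0$ (with, say, $w<1/n$ and $\epsilon<1/n-w$). Agent $n$, the ``big'' agent, has value measure $V_n$ uniform on all of $[0,1]$ and entitlement $t_n=1-\epsilon$. For $m=1,\dots,n-1$ place a tiny interval $Q_m$ of width $w$ centred at $m/n$; these lie well inside $(0,1)$ and are pairwise more than $\epsilon$ apart. Agent $m$ has $V_m$ uniform on $Q_m$ (so $V_m(C)=1$, with all of agent $m$'s value on $Q_m$) and entitlement $t_m=\epsilon/(n-1)>0$, so that $\sum_{i}t_i=1$. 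All $V_i$ are non-atomic. (For $n=2$ this is the one-small-agent version of the construction.)

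First I would record what $t$-proportionality forces. Agent $n$ needs $V_n(X_n)=|X_n|\ge t_n=1-\epsilon$, so $|S|\le\epsilon$ where $S:=C\setminus X_n=\bigcup_{m<n}X_m$; hence every connected component of every $X_m$ ($m<n$) has length at most $\epsilon$. Since agent $m$ needs $V_m(X_m)\ge t_m>0$ and $V_m$ is supported on $Q_m$, the piece $X_m$ meets $Q_m$; the component of $X_m$ meeting $Q_m$ has length $\le\epsilon$, so it lies in a small neighbourhood of $Q_m$ and is therefore disjoint from the cake's endpoints and from all the other $Q_{m'}$. In particular agent $n$ cannot get a connected piece, for then $S$ would be contained in $[0,\epsilon]\cup[1-\epsilon,1]$, which misses every interior $Q_m$ and leaves some small agent with value $0$.

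Next I would count components. Let $K$ be the number of maximal intervals (``runs'') of $S$. A single run has length $\le|S|\le\epsilon$, hence meets at most one $Q_m$; since each $Q_m$ is met by $S$, we get $K\ge n-1$, and moreover at least $n-1$ of the runs are interior (each sits near some $Q_m$). Now $X_n=C\setminus S$ has exactly $K+1-e$ connected components, where $e\in\{0,1,2\}$ is the number of endpoints of $[0,1]$ lying in $S$. Because at least $n-1$ of the $K$ runs are interior, $e\le\min\{2,\,K-(n-1)\}$, and a short case split on $K\le n+1$ versus $K\ge n+2$ gives $K+1-e\ge n$. Thus agent $n$ alone contributes at least $n$ pieces, the small agents contribute $\sum_{m<n}c_m\ge K\ge n-1$ more, so the allocation has at least $2n-1$ pieces, i.e.\ it uses at least $2n-2$ cuts.

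I expect the main obstacle to be the second paragraph: making airtight that $t_n$ so close to $1$ genuinely confines the other $n-1$ agents to short, pairwise-separated, interior pieces — this is exactly where the parameter inequalities among $\epsilon$, $w$ and $1/n$ are used. The component count in the third paragraph is then elementary but needs care with degenerate cases (a piece touching $0$ or $1$, or a zero-value ``wasted'' component of a small agent — each of which only enlarges $\sum_{m<n}c_m$ and does not hurt the bound). Finally I would note that the instance does admit a $t$-proportional division — give agent $m$ a sub-interval of $Q_m$ of value exactly $t_m$ and agent $n$ everything else — which uses precisely $2n-2$ cuts, so the statement is non-vacuous and the bound is attained with equality.
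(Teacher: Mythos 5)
Your proof is correct and follows essentially the same strategy as the paper's: one agent with entitlement close to $1$ plus $n-1$ agents whose values are concentrated on pairwise-separated interior regions, forcing the big agent's piece to have at least $n$ components and the small agents' pieces at least $n-1$ in total, hence at least $2n-1$ intervals and $2n-2$ cuts. The only difference is the mechanism that forces the big agent into $n$ components --- the paper concentrates his value on $n$ interleaved blocks and argues he must touch each of them, whereas you give him uniform value and argue via the length bound $|S|\le\epsilon$ on the complement; both work.
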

\begin{proof}
Suppose the entitlement of agent 1 is ${n-0.9\over n}$ and the entitlement of each of the other agents is ${0.9 \over n(n-1)}$. The cake is an interval consisting of $2 n-1$ subintervals and the agent valuations in these subintervals are:
\begin{center}
\begin{tabular}{|c|c|c|c|c|c|c|c|c|c|c|c|}
	\hline
	\hline Agent 1 & 1 & 0 & 1 & 0 & 1 & 0 & 1 & ... & 1 & 0 & 1
	\\
	\hline Agent 2 & 0 & 1 & 0 & 0 & 0 & 0 & 0 & ... & 0 & 0 & 0
	\\
	\hline Agent 3 & 0 & 0 & 0 & 1 & 0 & 0 & 0 & ... & 0 & 0 & 0
	\\
	\hline Agent 4 & 0 & 0 & 0 & 0 & 0 & 1 & 0 & ... & 0 & 0 & 0
	\\
	\hline ... &  &  &  &  &  &  &  & ... &  &  & 
	\\
	\hline Agent $n$ & 0 & 0 & 0 & 0 & 0 & 0 & 0 & ... & 0 & 1 & 0
	\\
	\hline
\end{tabular}
\end{center}
Agent 1 must receive more than ${n-1\over n}$ of his cake value, so he must receive a positive slice of each of his $n$ positive subintervals. But, he cannot receive a single interval that touches two of his positive subintervals, since such an interval will leave one of the other agents with zero value. Therefore, agent 1 must receive at least $n$ different intervals. Each of the other agents must receive at least one interval, so the total number of intervals is at least $2 n-1$. Therefore at least $2 n-2$ cuts are needed.
\end{proof}

\section{Upper Bound}
\label{sec:upper}
The upper bound uses the \emph{Stromquist-Woodall theorem} \citep{Stromquist1985Sets}. They consider a cake in which the left and right endpoints of the cake are identified (In other words, $C$ is a 1-dimensional ``pie'').
\begin{theorem*}[\citet{Stromquist1985Sets}]
Consider a 1-dimensional pie $C$, 
a set $N$ of $n$ agents with nonatomic measures on $C$, and a number $r\in[0,1]$.
There always exists a subset $C_r \subseteq C$, which is a union of at most $n-1$ intervals, such that for every $i\in N$:
\begin{align*}
V_i (C_r) = r\cdot V_i(C).
\end{align*}
The number $n-1$ is tight when the denominator in the reduced fraction of $r$ is at least $n$
(in particular, it is tight when $r$ is irrational).
\end{theorem*}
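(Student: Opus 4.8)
The plan is to establish the existence part by induction on $n$ and the sharpness part by an explicit construction. Normalize every measure so that $V_i(C)=1$, and reparametrize the pie as the circle $\mathbb{R}/\mathbb{Z}$ so that $V_1$ is the uniform (Lebesgue) measure; write arcs additively modulo $1$. The cases $r\in\{0,1\}$ (take $C_r=\emptyset$ or $C_r=C$) are trivial, so assume $r\in(0,1)$. For the base case $n=2$ a ``union of at most one arc'' is a single arc, and the key point is a one-line averaging computation. For $\theta\in\mathbb{R}/\mathbb{Z}$ let $A_\theta:=[\theta,\theta+r]$, the arc of $V_1$-measure exactly $r$ starting at $\theta$; then $V_1(A_\theta)=r$ identically, and by Fubini $\int_0^1 V_2(A_\theta)\,d\theta=\int_C(\int_0^1\mathbf{1}[x\in A_\theta]\,d\theta)\,dV_2(x)=\int_C r\,dV_2(x)=r$, since $x\in A_\theta$ precisely when $\theta$ ranges over an arc of length $r$. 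As $\theta\mapsto V_2(A_\theta)$ is continuous (the $V_i$ are nonatomic) on the connected circle and has average $r$, it attains the value $r$, and the corresponding arc is the desired $C_r$.

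For the inductive step assume the statement for $n-1$ measures. Fix $r$ and apply the induction hypothesis to $V_1,\dots,V_{n-1}$, obtaining $S_0$, a union of at most $n-2$ arcs with $V_i(S_0)=r$ for all $i\le n-1$; this uses one fewer arc than allowed, leaving one ``spare'' arc with which to adjust the last measure. If $V_n(S_0)=r$ we are done; otherwise the goal is to deform $S_0$ through a continuous one-parameter family $\{S_u\}_{u\in[0,1]}$ --- each $S_u$ a union of at most $n-1$ arcs, each still satisfying $V_i(S_u)=r$ for all $i\le n-1$ --- along which $u\mapsto V_n(S_u)$ is continuous and attains values on both sides of $r$; the intermediate value theorem then yields $u$ with $V_n(S_u)=r$, and $C_r:=S_u$ works. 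Heuristically, the endpoints of the arcs of $S_0$ together with the two endpoints of the extra arc furnish $2(n-1)$ real parameters on which the $n-1$ requirements $V_i(\cdot)=r$, $i\le n-1$, cut out a family of dimension at least $n-1\ge 1$; moving along it alters $V_n$ while pinning the remaining measures. I expect this to be the crux of the proof: what must be proved is not merely that such a family exists locally (a dimension count), but that a single connected family reaches configurations with $V_n$ both below and above $r$, so that $V_n$ genuinely sweeps through $r$. This is a global statement about the $2(n-1)$-dimensional space of unions of at most $n-1$ arcs on the circle (coordinatized by its cut points), most naturally proved by a degree or connectedness argument; the symmetry $S\leftrightarrow C\setminus S$, which interchanges $r$ and $1-r$, helps locate configurations on either side of $r$.

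For the sharpness assertion, fix $n\ge 3$ and $r\in((n-2)/(n-1),\,1)$. Every such $r$ automatically has reduced denominator at least $n$, because the largest fraction below $1$ with denominator at most $n-1$ is exactly $(n-2)/(n-1)$; in particular this range contains irrational $r$. (Other values of $r$ with denominator at least $n$ are reached by applying the construction below to $1-r$ and using the symmetry $S\leftrightarrow C\setminus S$, or, for $r$ in the middle range, by a rescaled version of the same pattern.) Cut the circle into $2(n-1)$ arcs in cyclic order --- ``islands'' and ``seas'' $b_1,s_1,b_2,s_2,\dots,b_{n-1},s_{n-1}$ --- of equal length. Let $V_1$ be the uniform measure carried by $b_1\cup\cdots\cup b_{n-1}$, so $V_1(b_j)=1/(n-1)$, and for $k=2,\dots,n$ let $V_k$ be the uniform measure carried by the sea $s_{k-1}$. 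Let $C_r$ be any set with $V_i(C_r)=r$ for all $i$. Since $r>1-1/(n-1)$, omitting any single island would force $V_1(C_r)\le(n-2)/(n-1)<r$, so $C_r$ meets every island $b_j$. Since $r<1$, we have $V_k(C_r)<V_k(C)$, so $C\setminus C_r$ meets every sea; choose $c_j\in s_j\setminus C_r$ for each $j$. The $n-1$ points $c_1,\dots,c_{n-1}$ lie in the $n-1$ distinct seas in cyclic order, hence split the circle into $n-1$ arcs whose $j$-th contains exactly the island $b_j$; as no $c_j$ lies in $C_r$, the set $C_r$ does not straddle any $c_j$, so $C_r$ is the disjoint union of its intersections with these $n-1$ arcs, each of which is nonempty since it contains $C_r\cap b_j$. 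Hence $C_r$ has at least $n-1$ connected components, while the existence part shows $n-1$ suffices; so $n-1$ is optimal in precisely the stated regime.
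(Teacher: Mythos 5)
First, a point of orientation: the paper does not prove this statement at all --- it is the Stromquist--Woodall theorem, imported verbatim from \citet{Stromquist1985Sets} and used as a black box in the proof of Theorem \ref{thm:upper}. So there is no in-paper proof to compare against, and what you have written must stand on its own as a proof of a known but genuinely nontrivial result. It does not yet do so. Your base case $n=2$ is essentially the standard averaging argument and is fine in spirit (though note that reparametrizing so that $V_1$ becomes Lebesgue is only a homeomorphism when $V_1$ has full support; if $V_1$ vanishes on an interval carrying positive $V_2$-mass, the map $\theta\mapsto V_2(A_\theta)$ can jump, so you need to reparametrize by a strictly positive combination of the measures and then run a two-parameter argument, which is already a small version of the real difficulty). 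The decisive gap is in the inductive step, and you name it yourself: you need a \emph{connected} family $\{S_u\}$ inside the set $\{S : V_i(S)=r \text{ for all } i\le n-1,\ S \text{ a union of } \le n-1 \text{ arcs}\}$ along which $V_n$ takes values both $\le r$ and $\ge r$. The dimension count only suggests that this constraint set is locally $(n-1)$-dimensional where the defining map is a submersion; it says nothing about connectedness of the relevant component, nor about whether $V_n$ straddles $r$ on the component containing your $S_0$, and the complementation symmetry $S\leftrightarrow C\setminus S$ exchanges $r$ with $1-r$, so it does not by itself produce configurations on both sides of $r$ for $r\neq 1/2$. That global step \emph{is} the theorem; Stromquist and Woodall's own argument is a careful induction with a bespoke combinatorial/topological mechanism, not a generic IVT sweep, and nothing in your sketch substitutes for it.

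Your tightness construction, by contrast, is correct and nicely self-contained for $r\in\bigl((n-2)/(n-1),\,1\bigr)$: the argument that $C_r$ must meet every island while its complement must meet every sea, so that $C_r$ has at least $n-1$ components, is complete. Complementation handles $r\in\bigl(0,\,1/(n-1)\bigr)$. But the theorem asserts tightness for \emph{every} $r$ whose reduced denominator is at least $n$, and for $r$ in the middle of $[0,1]$ your ``rescaled version of the same pattern'' is not supplied; as written it is a placeholder, not a proof. (For the purposes of showing that the bound $n-1$ cannot be improved in general, a single witnessing $r$ suffices, and you do provide that; but the statement as quoted claims more.) In summary: the sharpness half is largely sound but incomplete in range, and the existence half is missing its central argument.
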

For the theorem below, denote $\widehat{n} := n$ rounded up to the nearest power of two $= 2^{\lceil\log_2{n}\rceil}$.
\begin{theorem}
\label{thm:upper}
For every $n$, set of $n$ value measures, and an entitlement-vector $t$,
a $t$-proportional allocation can be implemented using at most
$2 n \log_2{\widehat{n}} - 2 \widehat{n} + 2$ cuts.
\end{theorem}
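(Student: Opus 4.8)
The plan is a recursive halving of the set of agents, where at each step the Stromquist--Woodall theorem is invoked to split the current cake between the two halves in proportion to their aggregate entitlements. To make the recursion self-similar I would introduce the notion of a \emph{$k$-piece}: a disjoint union of $k$ intervals. I then describe a procedure that, given a $k$-piece $D$, a set $M$ of $m$ agents with positive entitlements summing to $1$, and nonatomic measures $(V_i)_{i\in M}$ on $D$, returns a $t$-proportional partition of $D$. If $m=1$ the single agent gets all of $D$. If $m\geq 2$, split $M$ into $M_1$ of size $\lceil m/2\rceil$ and $M_2$ of size $\lfloor m/2\rfloor$, put $r:=\sum_{i\in M_1}t_i\in(0,1)$, and glue the $k$ intervals of $D$ cyclically into a $1$-dimensional pie $\widetilde D$ --- this is legitimate because only finitely many measure-zero points are identified, so the induced measures on $\widetilde D$ stay nonatomic. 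By the Stromquist--Woodall theorem there is a subset $C_r\subseteq\widetilde D$ that is a union of at most $m-1$ arcs with $V_i(C_r)=r\,V_i(D)$ for all $i\in M$. Un-gluing turns $C_r$ and $\widetilde D\setminus C_r$ into sub-pieces $D_1,D_2$ of $D$; recurse on $(D_1,M_1)$ with entitlements rescaled by $1/r$ and on $(D_2,M_2)$ with entitlements rescaled by $1/(1-r)$. Running this on the original interval $C$ (a $1$-piece) and $N$ solves the problem.

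Correctness is a short downward induction on the recursion tree: for $i\in M_1$ the recursive guarantee gives $V_i(X_i)\geq (t_i/r)\,V_i(C_r)=t_i\,V_i(D)$, and symmetrically for $M_2$, and these inequalities telescope to $V_i(X_i)\geq t_i\,V_i(C)$ for the original cake and entitlements. Agents with entitlement $0$ can simply be handed the empty set beforehand, so the positivity assumption costs nothing.

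The substance is the cut count. On $\widetilde D$ the set $C_r$ is a union of $m'\leq m-1$ pairwise-disjoint arcs and its complement is a union of exactly $m'$ arcs, so they meet at $2m'$ boundary points; un-gluing the $k$ identification points adds at most one extra interval to $D_1$ or $D_2$ for each identification point lying in the interior of an arc, so $D_1$ is a $k_1$-piece and $D_2$ a $k_2$-piece with $k_1+k_2\leq 2m'+k\leq k+2(m-1)$ and $k_1,k_2\geq 1$. Writing $\phi(m)$ for the worst-case total number of intervals the procedure produces starting from a $1$-piece, this inequality shows by induction that starting from a $k$-piece produces at most $k-1+\phi(m)$ intervals, hence $\phi(1)=1$ and $\phi(m)=2m-3+\phi(\lceil m/2\rceil)+\phi(\lfloor m/2\rfloor)$ for $m\geq 2$. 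I would then solve this recurrence, proving $\phi(m)\leq 2m\log_2\widehat m-2\widehat m+3$ with equality when $m$ is a power of two. Since a partition of the interval $C$ into $T$ intervals uses exactly $T-1$ cuts, the number of cuts is at most $\phi(n)-1\leq 2n\log_2\widehat n-2\widehat n+2$.

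I expect the main obstacle to be the piece-counting rather than the analysis itself: the key idea is that a union of intervals can be re-glued into a pie so that Stromquist--Woodall keeps applying, and one must then track precisely how the number of connected components grows under un-gluing (the bound $k_1+k_2\leq k+2(m-1)$). Solving the recurrence for $\phi$ is routine but mildly delicate when $n$ is not a power of two; I would do it by induction on $n$ using the balanced split $\lceil n/2\rceil,\lfloor n/2\rfloor$ and distinguishing the case where $\lfloor n/2\rfloor$ is itself a power of two, since there the two halves fall into different $\widehat{\cdot}$ brackets and the bound must be checked to remain tight.
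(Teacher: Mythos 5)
Your proposal follows the paper's proof essentially verbatim: a recursive halving of the agent set, with the Stromquist--Woodall theorem applied to the current piece (viewed as a pie) to split it in the ratio $r:(1-r)$ of aggregate entitlements, followed by the same recurrence $g(m)=2m-2+g(\lceil m/2\rceil)+g(\lfloor m/2\rfloor)$ for the cut count. Your $k$-piece bookkeeping and re-gluing argument in fact make rigorous the one step the paper treats informally --- invoking the induction hypothesis on $C_A$ and $C_B$, which are unions of intervals rather than single intervals --- so the proposal is correct.
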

\begin{proof}
In this proof, for convenience, the valuations are normalized such that the total cake value is $1$.

The proof is by induction on $n$. For $n=1$, the single agent gets the entire cake with zero cuts.
Suppose the claim is true for all integers smaller than $n$.
Given $n$ agents, let $n_A := \lfloor n/2 \rfloor$ and $n_B := n - n_A = \lceil n/2 \rceil$. Partition the set of agents $N$ into two disjoint subsets: $N_A$ with $n_A$ agents and $N_B$ with $n_B$ agents. Define:
\begin{align*}
r_A := \sum_{i\in N_A} t_i && r_B := 1-r_A = \sum_{i\in N_B} t_i 
\end{align*}
By the Stromquist-Woodall theorem, there exists a piece $C_A$ that all $n$ agents value at exactly $r_A$. Define $C_B := C\setminus C_A$ and note that all agents value $C_B$ at exactly $r_B$.
If the two endpoints of $C$ are identified, then $C_A$ is a union of at most $n-1$ intervals. Hence, it is possible to separate $C_A$ from $C_B$ using at most $2 n - 2$ cuts --- two cuts per interval.
The same cuts can be used when the endpoints are not identified (even though in this case $C_A$ might be a union of $n$ intervals).

For each $k\in\{A,B\}$, define a new entitlement\-/vector $t_k := (t_i / r_k)_{i\in N_k}$.
By the induction assumption, 
there exists a $t_k$\-/proportional allocation of $C_k$ among the agents in $N_k$ using at most 
$2 n_k \log_2{\widehat{n_k}} - 2 \widehat{n_k} + 2$ cuts.
Note that 
$\widehat{n_k} \leq \widehat{n}/2$,
so the number of cuts required for each of the two sub-divisions is at most
$2 n_k (\log_2{\widehat{n}}-1) -  \widehat{n} + 2$.

Combining the two allocations yields a $t$-proportional allocation of $C$. The total number of required cuts is at most:
\begin{align*}
&
[2n-2]
+
[2 n_A (\log_2{\widehat{n}}-1) -  \widehat{n} + 2]
+
[2 n_B (\log_2{\widehat{n}}-1) -  \widehat{n} + 2]
\\
=&
[2n-2]
+
[2 n (\log_2{\widehat{n}}-1) -  2\widehat{n} + 4]
\\
=&
2 n \log_2{\widehat{n}} -  2\widehat{n} + 2.
\qedhere
\end{align*}
\end{proof}
For $n=2$, both the lower and the upper bound equal $2$, but for $n=3$ they differ --- the lower bound is $4$ and the upper bound is $6$. The theorems below show some cases in which the lower bound is attainable.
\begin{theorem}
With $n=3$ agents, a $t$-proportional allocation with at most $4$ cuts exists when:

(a) $t_i = 1/2$ for some $i\in\{1,2,3\}$, or ---

(b) $t_i = t_j$ for some $\{i,j\}\subseteq \{1,2,3\}$, and the entitlements are rational.
\end{theorem}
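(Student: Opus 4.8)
For part (a), suppose $t_1 = \tfrac12$, so that $t_2 + t_3 = \tfrac12$. The plan is to apply the Stromquist--Woodall theorem to the two agents $\{2,3\}$ with $r = \tfrac12$. This yields a set $D$ which is a union of at most one interval of the pie --- hence at most two intervals of the cake, and in either case $D$ is separated from $C\setminus D$ by just two cut-points --- with $V_2(D) = \tfrac12 V_2(C)$ and $V_3(D) = \tfrac12 V_3(C)$, so that $C\setminus D$ is also worth exactly half the total to each of agents 2 and 3. Agent 1 values at least one of the two pieces $D$, $C\setminus D$ at $\ge \tfrac12 V_1(C) = t_1 V_1(C)$; hand that piece to agent 1. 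The other piece is worth exactly $\tfrac12$ of the total to each of agents 2 and 3, so relative to it their entitlements are $2t_2$ and $2t_3$, which sum to $1$. Since that piece is a union of at most two intervals, a $t$-proportional (indeed exact) division of it between agents 2 and 3 can be carried out with at most two more cuts: glue the two intervals into one and invoke Stromquist--Woodall again, now with $n=2$. In total at most $2+2=4$ cuts are used.

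For part (b), write $t_1 = t_2 = a$ with $a$ rational; by part (a) we may take $a \ne \tfrac12$, and since all entitlements equal to $\tfrac13$ can be handled with $n-1 = 2$ cuts we may take $a \ne \tfrac13$, so $0 < a < \tfrac12$. Two features will be exploited. First, because $t_1 = t_2$, once agents 1 and 2 are confined to a single connected region $R$ they can share it by cut-and-choose with just one cut, each then obtaining at least $\tfrac12 V_i(R)$. Second, the rationality of the entitlements, which I expect to enter through an induction on the common denominator. The main construction I would attempt is: find a single arc $D$ with $V_1(D) = V_2(D)$ --- such arcs exist for every prescribed common value by the $n=2$ case of Stromquist--Woodall --- chosen so that one of $D$, $C\setminus D$ is worth at least $t_3$ to agent 3 while the other is worth at least $2a$ to each of agents 1 and 2; give agent 3 a sub-arc of the former worth exactly $t_3$, and let agents 1 and 2 cut-and-choose the complement. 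Separating $D$ costs two cuts, carving out agent 3's sub-arc one cut, and the cut-and-choose at most one more: at most four in all. When agent 3's values fall into an awkward range that blocks this (forcing agents 1 and 2 into a region too small for cut-and-choose to give each of them $a$), the plan is to use the rationality to pass to a problem with strictly smaller denominator while spending only a bounded number of cuts, and induct.

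The step I expect to be the main obstacle is the cut accounting in part (b). For $n=3$, a Stromquist--Woodall set whose ratio has reduced denominator at least $3$ is a union of two arcs and already consumes four cuts just to isolate, leaving nothing for any subsequent sub-division; so the familiar ``consensus set for all three agents'' move is unaffordable. The whole difficulty is to satisfy all three agents using only single-arc (two-cut) operations together with cut-and-choose, and I expect the awkward configurations to demand a genuine case analysis in which the rationality of the entitlements is indispensable.
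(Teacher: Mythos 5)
Your part (a) is correct and is a close cousin of the paper's argument: the paper first splits $C$ between agents 2 and 3 in ratio $2t_2:2t_3$ (two cuts, via Theorem \ref{thm:upper} with $n=2$) and then has each of them share their piece with agent 1 by cut-and-choose (one cut each), whereas you first halve $C$ exactly for agents 2 and 3, hand agent 1 whichever half he prefers, and then split the other half between 2 and 3. Both accountings give $2+2=4$ cuts and both are valid; the ``glue the at most two intervals and reapply Stromquist--Woodall'' step is exactly the device already used in the proof of Theorem \ref{thm:upper}, so there is no hidden cost there.

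Part (b), however, is not a proof: you have correctly identified the obstruction (a three-agent consensus piece costs four cuts by itself), but the construction you offer in its place is conditional on finding an arc $D$ such that one of $D$, $C\setminus D$ is simultaneously worth at least $2a$ to \emph{both} agents 1 and 2 while the other is worth at least $t_3$ to agent 3 --- and no argument is given that such an arc exists; a single one-parameter family of arcs cannot in general control three valuations at once, and you concede the ``awkward range'' is unhandled. The fallback, an induction on the common denominator ``spending only a bounded number of cuts'' per step, is also a dead end as stated: each inductive step would add cuts, so without a termination-in-one-step argument the total is unbounded. The idea you are missing is where rationality actually enters in the paper: write $t_1=t_2=B/D$, treat $C$ as a circle, and have agent 1 mark $D$ points splitting the circle into $D$ parts each worth exactly $1/D$ to him. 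Among the $D$ cyclic windows of $B$ consecutive parts, the sum of agent 3's values is $B\cdot V_3(C)$, so the window $X$ she values least satisfies $V_3(X)\le (B/D)V_3(C)$. If $V_2(X)\le (B/D)V_2(C)$, agent 1 takes $X$ (worth exactly his entitlement to him, two cuts) and agents 2 and 3 split the remainder, which each of them values at $\ge (D-B)/D$, in ratio $B:(D-2B)$ (two more cuts); if $V_2(X)\ge (B/D)V_2(C)$, agent 2 takes $X$ instead and agents 1 and 3 split the remainder symmetrically. The symmetry $t_1=t_2$ is what lets either of agents 1, 2 play the role of the taker, and the discretization into $D$ equal parts plus the cyclic pigeonhole is what protects agent 3; your sketch contains neither ingredient.
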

\begin{proof}
(a) W.l.o.g. assume that $t_1 = 1/2$. Consider the subset of agents $\{2,3\}$ and the entitlement\-/vector $(2 t_2, 2 t_3)$. By Theorem \ref{thm:upper}, using two cuts it is possible to give each agent $i\in\{2,3\}$ a piece worth at least $2 t_i$. Now each agent $i\in\{2,3\}$ can divide his/her piece with agent 1 using cut-and-choose. This requires two additional cuts, and results in a $t$-proportional allocation of $C$.

(b)  W.l.o.g. assume that, for some integers $B$ and $D$, $t_1 = t_2 = B/D$. So $t_3 = (D-2B)/D$. 
Normalize the valuations such that the total cake value is $D$.
Assume that the cake is a circle by identifying its two endpoints. 
Ask agent 1 to mark a partition of $C$ into $D$ parts, such that each part is worth for him exactly $1$.
Ask agent 3 to select $B$ consecutive parts with a smallest value for her. Denote this sequence of $B$ parts by $X$.
By the pigeonhole principle, $V_3(X)\leq B$. Now there are two cases:
\begin{itemize}
\item $V_2(X)\leq B$ too. Then, agent 1 cuts $X$ from the cake using two cuts, and takes it. Agent 1's value is exactly $B$. The remainder is worth for agents 2 and 3 at least $D-B$; they share it among them in entitlement-ratios ${B\over D-B},{D-2B\over D-B}$. By Theorem \ref{thm:upper}, this requires two additional cuts.
\item $V_2(X)\geq B$. Then, agent 2 cuts $X$ from the cake using two cuts, and takes it. Agent 2's value is at least $B$. The remainder is worth for agent 1 exactly $D-B$ and for agent 3 at least $D-B$; they share it among them in entitlement-ratios ${B\over D-B},{D-2B\over D-B}$. This requires two additional cuts.\qedhere
\end{itemize}
\end{proof}

\begin{theorem}
For every $n\geq 2$, a $t$-proportional allocation with at most $2 n - 2$ cuts exists if, for some integer $D\geq 1$, at least $n-1$ entitlements in $t$ are equal to $1/D$.
\end{theorem}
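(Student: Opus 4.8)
The plan is to build the allocation with a single left-to-right moving-knife sweep instead of a divide-and-conquer recursion, so that all the slices it produces are mutually adjacent and the many slices collected by the ``large'' agent automatically coalesce into a few intervals. First rescale each measure so that $V_i(C)=D$, and relabel so that agents $2,\dots,n$ are those with entitlement $1/D$; then each of agents $2,\dots,n$ needs value $1$ and agent $1$ needs value $D-n+1$ (necessarily $D\ge n-1$, and if $D=n-1$ agent $1$ needs nothing, so we discard agent $1$ and run a classical connected proportional division of $C$ among the $n-1$ equal agents with $n-2\le 2n-2$ cuts). Maintain a current left endpoint $p$ (initially the left end of $C$), a set $S$ of still-unsatisfied small agents (initially $\{2,\dots,n\}$), and a residual demand $\tau_1$ for agent $1$ (initially $D-n+1$, reset to $0$ once agent $1$ is satisfied). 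Call an agent \emph{active} if it lies in $S$, or it is agent $1$ and $\tau_1\ge 1$. In each round, advance the knife from $p$ to the least point $q$ at which some active agent values $[p,q]$ at exactly $1$; award $[p,q]$ to that agent, set $p:=q$, and either delete the agent from $S$ (a small agent is then permanently satisfied with value $1$) or decrement $\tau_1$ (agent $1$ receives a ``down payment'' of value $1$). When $S$ becomes empty, give the remaining cake to agent $1$. The essential trick is the cap: even though agent $1$'s total demand may be large, each round costs it --- and every other active agent --- at most one unit of value.

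Correctness would follow from the invariant that $V_i([p,L])\ge \tau_1+|S|$ for every active agent $i$. It holds with equality at the start because $(D-n+1)+(n-1)=D$, and each round preserves it: when a small agent crosses at $q$, every other active agent $i$ has $V_i([p,q])\le 1$ while $\tau_1+|S|$ drops by exactly $1$; and when agent $1$ crosses at $q$, each still-active $i$ has $V_i([p,q])\le 1$ (with equality for agent $1$) while $\tau_1+|S|$ again drops by exactly $1$. The invariant gives $V_i([p,L])\ge 1$ for every active $i$, so the knife always meets a crossing before reaching $L$ and every round is well defined; it makes each small agent leave with value exactly $1=t_iV_i(C)$; and since agent $1$ collects $D-n+1$ slices of value $1$, together with a final tail worth at least its leftover demand, agent $1$ ends with value at least $D-n+1=t_1V_1(C)$. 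Hence the division is $t$-proportional. I expect the only delicate points to be the simultaneous-crossing (tie) case, where one simply awards the slice to any one of the tied agents, and writing the invariant step cleanly in the rounds where agent $1$ is active.

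The cut count is where the consecutive-slice structure pays off. The slices are abutting intervals $[0,p_1],[p_1,p_2],\dots,[p_{m-1},L]$; exactly $n-1$ of them go to the $n-1$ distinct small agents and the remaining $m-(n-1)$ to agent $1$. Regarding the $n-1$ small-agent slices as barriers, agent $1$'s slices lie in at most $n$ maximal runs of consecutive slices, and each run is a single interval; so the final allocation has at most $n$ intervals owned by agent $1$ and one for each small agent, i.e.\ at most $2n-1$ intervals and hence at most $2n-2$ cuts. Equivalently, of the $m-1$ knife marks precisely $\big(m-(n-1)\big)-r$ lie interior to a run of agent $1$ (where $r\le n$ is the number of such runs) and are therefore not genuine cuts, leaving $n-2+r\le 2n-2$ real cuts. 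The main burden in the write-up is to keep this bookkeeping airtight: that each small agent receives exactly one slice, that the final tail lies in one of the at most $n$ blocks delimited by the small-agent slices rather than creating a new one, and hence that $r\le n$.
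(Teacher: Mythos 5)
Your proof is correct, and it is in essence a constructive rendering of the paper's own argument: the paper clones the large agent into $D-n+1$ unit\-/entitlement copies and invokes the existence of a connected proportional division among the resulting $D$ equal agents, whereas your capped\-/demand moving knife is exactly the Dubins--Spanier sweep run on that cloned instance, with the cloning hidden in the counter $\tau_1$. The two cut counts are the same bound seen from opposite sides: the paper extracts each of the $n-1$ small agents' single intervals with two cuts ($2(n-1)$ cuts in total, the complement going to the large agent), while you count at most $n$ maximal runs for the large agent plus $n-1$ small\-/agent intervals, i.e.\ at most $2n-1$ pieces and hence $2n-2$ boundaries. Your version is more self\-/contained --- the invariant $V_i([p,L])\ge \tau_1+|S|$ replaces the appeal to an existence theorem for connected proportional division --- at the cost of the tie\-/breaking and runs bookkeeping, all of which you have handled correctly, including the degenerate case $D=n-1$ and the placement of the final tail.
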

\begin{proof}~
Assume t.l.o.g. that, for some integer $D$, $t_1= \cdots =t_{n-1} = 1/D$. So $t_n = (D-n+1)/D$.
Normalize the valuations such that the total cake value is $D$.
Create $D-n+1$ clones of agent $n$.
Find a connected proportional division of the cake among all $D$ agents. 
In this division, each agent $i\range{1}{n-1}$ receives a single connected piece worth at least $1$; cut it from the cake using $2$ cuts. 
The remaining cake is worth for agent $n$ at least $D-n+1$.
All in all, $2(n-1)$ cuts are required.
\end{proof}

Based on these special cases, it may be conjectured that the lower bound is always attainable:
\begin{conjecture*}
For every $n$, set of $n$ value measures, and an entitlement-vector $t$,
there exists a $t$-proportional allocation with at most
$ 2 n - 2$ cuts.
\end{conjecture*}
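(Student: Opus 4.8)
The plan is to sharpen the divide\-/and\-/conquer argument of Theorem \ref{thm:upper} so that the recursion closes at $2n-2$. Writing $f(n)$ for the worst\-/case number of cuts, a recursion $f(n)\le f(n_A)+f(n_B)+g$ with $n_A+n_B=n$ reaches $f(n)\le 2n-2$ only when the step separating the two agent groups costs $g\le 2$ new cuts, that is, only when that step removes a single interval. The first thing to try is therefore to peel off one agent at a time: choose an agent $j$, cut off from an endpoint a single interval $Y$ with $V_j(Y)\ge t_j$ and $V_i(Y)\le t_j$ for every $i\ne j$, give $Y$ to $j$, and recurse on the single interval $C\setminus Y$ with the remaining agents and their rescaled entitlements $t_i/(1-t_j)$ (which sum to $1$). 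The condition $V_i(Y)\le t_j$ guarantees $V_i(C\setminus Y)\ge 1-t_j$, so the rescaled entitlements remain feasible and the induction applies; since creating $Y$ costs a single cut, this would give $f(n)\le f(n-1)+1$, even better than needed.

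The reason this over\-/optimistic plan fails --- and the main obstacle in general --- is that no such single\-/interval $Y$ need exist, for any choice of $j$: in the lower\-/bound instance of Theorem \ref{thm:lower} one agent has value spread over $n$ separated subintervals, so no endpoint interval $Y$ with $V_j(Y)\ge t_j$ keeps $V_i(Y)\le t_j$ for all $i\ne j$, whether $j$ is the spread\-/out agent or one of the small ones. The natural repair is to let the peeled agent take several intervals; but obtaining a piece worth exactly $t_j$ to \emph{every} agent via the Stromquist-Woodall theorem yields a union of up to $n-1$ intervals, and this is tight once the reduced denominator of $t_j$ is at least $n$ (in particular when $t_j$ is irrational), pushing $f(n)$ back up to $\Theta(n^2)$. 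What is missing is a ``few\-/interval, one\-/sided'' version: for a well\-/chosen $j$, a piece $Y$ that is a union of $o(n)$ intervals with $V_j(Y)\ge t_j$ and $V_i(Y)\le t_j$ for all $i\ne j$, with the excess intervals amortized against the savings obtained when later agents turn out to need fewer pieces.

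Accordingly I would pursue two lines. First, strengthen the induction hypothesis to cakes that are a disjoint union of several intervals and track a potential combining the number of remaining agents with the current fragmentation; at each step peel the agent who is hardest to localize, and show the potential never increases --- that fragmenting the cake to satisfy one demanding agent is always paid for by the reduced needs of the others. This monotonicity is the delicate point. Second, abandon induction and argue existence directly: parametrize partitions of $C$ into at most $2n-1$ consecutive intervals together with an assignment of the pieces to the $n$ agents, and show by a topological fixed\-/point or Borsuk-Ulam\-/type argument --- in the spirit of Stromquist's proof of envy\-/free division with connected pieces --- that some labelled partition is $t$\-/proportional; the difficulty is that unequal entitlements break the symmetry such theorems exploit, so one likely needs a weighted or degree\-/theoretic variant, and even making the assignment vary continuously over the configuration space is nontrivial. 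Even the case $n=3$, where the gap is between $4$ and $6$ cuts, is open, and settling it would probably expose the right mechanism for general $n$.
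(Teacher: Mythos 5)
This statement is not proved in the paper --- it is stated explicitly as an open conjecture, with the case $n=3$ and entitlements $(1/7,\,2/7,\,4/7)$ named as the simplest unresolved instance. Your submission, to its credit, recognizes this and is honest about it, but it is a research plan rather than a proof, and as a proof it has a gap that is the entire content of the problem. Your first line (peel off one agent with a single endpoint interval) you yourself refute using the lower-bound construction. Your second line (a ``few-interval, one-sided'' peeling with a potential function amortizing fragmentation against later savings) is exactly the right shape of idea, but the crucial claim --- that the potential never increases, i.e.\ that the extra intervals needed to satisfy one spread-out agent are always paid for by the reduced needs of the remaining agents --- is asserted as ``the delicate point'' and never established. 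Your third line (a direct topological/degree-theoretic existence argument over labelled partitions into at most $2n-1$ intervals) likewise stops at the acknowledged difficulty that unequal entitlements destroy the symmetry that Stromquist-type arguments rely on. None of the three lines is carried to completion, so no $t$-proportional allocation with $2n-2$ cuts is ever exhibited.

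For calibration against what the paper actually does: the paper proves the $2n-2$ bound only in special cases --- $n=3$ with one entitlement equal to $1/2$ or with two equal rational entitlements, and general $n$ when $n-1$ entitlements all equal $1/D$ --- using ad hoc combinations of cut-and-choose, a pigeonhole argument on a circular cake, and agent cloning. Its general upper bound remains $2n\log_2\widehat{n} - 2\widehat{n} + 2$, obtained by the halving recursion you correctly identify as too expensive ($2n-2$ cuts per split rather than $2$). So your diagnosis of where the difficulty lies is accurate and matches the paper's own assessment, but the conjecture remains unproved by your argument just as it remains unproved in the paper. If you want a tractable next step, the paper's special cases suggest attacking $(1/7,2/7,4/7)$ directly before attempting the general potential-function machinery.
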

The simplest case in which the conjecture is open is $n=3$ agents with entitlements  $(1/7,~2/7,~4/7)$.

\section{Acknowledgments}
I am grateful for helpful discussions with Sam Zbarsky%
\footnote{in https://mathoverflow.net/q/242112/34461}
and Alex Ravsky.%
\footnote{in https://math.stackexchange.com/q/3039348/297801}

\section*{References}
\bibliographystyle{elsarticle-num-names-alpha}
\bibliography{../erelsegal-halevi}

\begin{thebibliography}{21}
\providecommand{\natexlab}[1]{#1}
\providecommand{\url}[1]{\texttt{#1}}
\providecommand{\urlprefix}{URL }
\expandafter\ifx\csname urlstyle\endcsname\relax
  \providecommand{\doi}[1]{doi:\discretionary{}{}{}#1}\else
  \providecommand{\doi}[1]{doi:\discretionary{}{}{}\begingroup
  \urlstyle{rm}\url{#1}\endgroup}\fi
\providecommand{\bibinfo}[2]{#2}

\bibitem[{Babaioff et~al.(2017)Babaioff, Nisan, and
  Talgam-Cohen}]{Babaioff2017Competitive}
\bibinfo{author}{M.~Babaioff}, \bibinfo{author}{N.~Nisan},
  \bibinfo{author}{I.~Talgam-Cohen}, \bibinfo{title}{{Competitive Equilibria
  with Indivisible Goods and Generic Budgets}},
  \urlprefix\url{http://arxiv.org/abs/1703.08150}, \bibinfo{note}{arXiv
  preprint 1703.08150}, \bibinfo{year}{2017}.

\bibitem[{Barbanel(1996)}]{barbanel1996game}
\bibinfo{author}{J.~B. Barbanel}, \bibinfo{title}{Game-theoretic algorithms for
  fair and strongly fair cake division with entitlements}, in:
  \bibinfo{booktitle}{Colloquium Mathematicae}, vol.~\bibinfo{volume}{69},
  \bibinfo{pages}{59--73}, \bibinfo{year}{1996}.

\bibitem[{Brams(2007)}]{Brams2007Mathematics}
\bibinfo{author}{S.~J. Brams}, \bibinfo{title}{{Mathematics and Democracy:
  Designing Better Voting and Fair-Division Procedures}},
  \bibinfo{publisher}{Princeton University Press}, \bibinfo{edition}{1st} edn.,
  ISBN \bibinfo{isbn}{0691133212}, \bibinfo{year}{2007}.

\bibitem[{Brams et~al.(2011)Brams, Jones, and
  Klamler}]{Brams2011DivideandConquer}
\bibinfo{author}{S.~J. Brams}, \bibinfo{author}{M.~A. Jones},
  \bibinfo{author}{C.~Klamler}, \bibinfo{title}{{Divide-and-Conquer: A
  Proportional, Minimal-Envy Cake-Cutting Algorithm}}, \bibinfo{journal}{SIAM
  Rev.} \bibinfo{volume}{53}~(\bibinfo{number}{2}) (\bibinfo{year}{2011})
  \bibinfo{pages}{291--307}, \doi{\bibinfo{doi}{10.1137/080729475}},
  \urlprefix\url{http://drops.dagstuhl.de/volltexte/2007/1221/pdf/07261.BramsSteven.Paper.1221.pdf}.

\bibitem[{Brams and Klamler(2017)}]{Brams2017FairDivision}
\bibinfo{author}{S.~J. Brams}, \bibinfo{author}{C.~Klamler},
  \bibinfo{title}{{Fair Division}}, in: \bibinfo{editor}{R.~A. Meyers} (Ed.),
  \bibinfo{booktitle}{Encyclopedia of Complexity and Systems Science},
  \bibinfo{publisher}{Springer Berlin Heidelberg}, \bibinfo{address}{Berlin,
  Heidelberg}, ISBN \bibinfo{isbn}{978-3-642-27737-5}, \bibinfo{pages}{1--12},
  \bibinfo{year}{2017}.

\bibitem[{Brams and Taylor(1996)}]{Brams1996Fair}
\bibinfo{author}{S.~J. Brams}, \bibinfo{author}{A.~D. Taylor},
  \bibinfo{title}{{Fair Division: From Cake Cutting to Dispute Resolution}},
  \bibinfo{publisher}{Cambridge University Press}, \bibinfo{address}{Cambridge
  UK}, ISBN \bibinfo{isbn}{0521556449}, \bibinfo{year}{1996}.

\bibitem[{Br\^{a}nzei(2015)}]{Branzei2015Computational}
\bibinfo{author}{S.~Br\^{a}nzei}, \bibinfo{title}{{Computational Fair
  Division}}, Ph.D. thesis, \bibinfo{school}{Faculty of Science and Technology
  in Aarhus university}, \bibinfo{year}{2015}.

\bibitem[{Cseh and Fleiner(2018)}]{cseh2018complexity}
\bibinfo{author}{{\'A}.~Cseh}, \bibinfo{author}{T.~Fleiner},
  \bibinfo{title}{The complexity of cake cutting with unequal shares}, in:
  \bibinfo{booktitle}{Proceedings of SAGT'18},
  \bibinfo{organization}{Springer}, \bibinfo{pages}{19--30},
  \bibinfo{note}{arXiv preprint 1709.03152}, \bibinfo{year}{2018}.

\bibitem[{Farhadi et~al.(2017)Farhadi, Ghodsi, Hajiaghayi, Lahaie, Pennock,
  Seddighin, Seddighin, and Yami}]{Farhadi2017Fair}
\bibinfo{author}{A.~Farhadi}, \bibinfo{author}{M.~Ghodsi},
  \bibinfo{author}{M.~Hajiaghayi}, \bibinfo{author}{S.~Lahaie},
  \bibinfo{author}{D.~Pennock}, \bibinfo{author}{M.~Seddighin},
  \bibinfo{author}{S.~Seddighin}, \bibinfo{author}{H.~Yami},
  \bibinfo{title}{{Fair Allocation of Indivisible Goods to Asymmetric Agents}},
  in: \bibinfo{booktitle}{Proceedings of AAMAS'17},
  \urlprefix\url{http://arxiv.org/abs/1703.01649}, \bibinfo{note}{arXiv
  preprint 1703.01649}, \bibinfo{year}{2017}.

\bibitem[{Lindner and Rothe(2016)}]{Lindner2016}
\bibinfo{author}{C.~Lindner}, \bibinfo{author}{J.~Rothe},
  \bibinfo{title}{Cake-Cutting: Fair Division of Divisible Goods}, in:
  \bibinfo{editor}{J.~Rothe} (Ed.), \bibinfo{booktitle}{Economics and
  Computation: An Introduction to Algorithmic Game Theory, Computational Social
  Choice, and Fair Division}, \bibinfo{publisher}{Springer Berlin Heidelberg},
  \bibinfo{address}{Berlin, Heidelberg}, ISBN
  \bibinfo{isbn}{978-3-662-47904-9}, \bibinfo{pages}{395--491},
  \urlprefix\url{https://doi.org/10.1007/978-3-662-47904-9\_7},
  \bibinfo{year}{2016}.

\bibitem[{McAvaney et~al.(1992)McAvaney, Robertson, and
  Webb}]{McAvaney1992Ramsey}
\bibinfo{author}{K.~McAvaney}, \bibinfo{author}{J.~M. Robertson},
  \bibinfo{author}{W.~A. Webb}, \bibinfo{title}{{Ramsey partitions of integers
  and fair divisions}}, \bibinfo{journal}{Combinatorica}
  \bibinfo{volume}{12}~(\bibinfo{number}{2}) (\bibinfo{year}{1992})
  \bibinfo{pages}{193--201},
  \urlprefix\url{http://dx.doi.org/10.1007/bf01204722}.

\bibitem[{Moulin(2004)}]{Moulin2004Fair}
\bibinfo{author}{H.~Moulin}, \bibinfo{title}{{Fair Division and Collective
  Welfare}}, \bibinfo{publisher}{The MIT Press}, ISBN
  \bibinfo{isbn}{0262633116}, \bibinfo{year}{2004}.

\bibitem[{Procaccia(2015)}]{Procaccia2015Cake}
\bibinfo{author}{A.~D. Procaccia}, \bibinfo{title}{{Cake Cutting Algorithms}},
  in: \bibinfo{editor}{F.~Brandt}, \bibinfo{editor}{V.~Conitzer},
  \bibinfo{editor}{U.~Endriss}, \bibinfo{editor}{J.~Lang},
  \bibinfo{editor}{A.~D. Procaccia} (Eds.), \bibinfo{booktitle}{Handbook of
  Computational Social Choice}, chap.~\bibinfo{chapter}{13},
  \bibinfo{publisher}{Cambridge University Press}, \bibinfo{pages}{261--283},
  \urlprefix\url{http://procaccia.info/papers/cakechapter.pdf},
  \bibinfo{year}{2015}.

\bibitem[{Robertson and Webb(1997)}]{Robertson1997Extensions}
\bibinfo{author}{J.~M. Robertson}, \bibinfo{author}{W.~A. Webb},
  \bibinfo{title}{{Extensions of Cut-and-Choose Fair Division}},
  \bibinfo{journal}{Elemente der Mathematik}
  \bibinfo{volume}{52}~(\bibinfo{number}{1}) (\bibinfo{year}{1997})
  \bibinfo{pages}{23--30},
  \urlprefix\url{http://dx.doi.org/10.1007/s000170050007}.

\bibitem[{Robertson and Webb(1998)}]{Robertson1998CakeCutting}
\bibinfo{author}{J.~M. Robertson}, \bibinfo{author}{W.~A. Webb},
  \bibinfo{title}{{Cake-Cutting Algorithms: Be Fair if You Can}},
  \bibinfo{publisher}{A K Peters/CRC Press}, \bibinfo{edition}{first} edn.,
  ISBN \bibinfo{isbn}{1568810768}, \bibinfo{year}{1998}.

\bibitem[{Segal-Halevi(2017)}]{SegalHalevi2017Phd}
\bibinfo{author}{E.~Segal-Halevi}, \bibinfo{title}{{Fair Division of Land}},
  Ph.D. thesis, \bibinfo{school}{Bar Ilan University, Computer Science
  Department}, \bibinfo{note}{guided by Yonatan Aumann and Avinatan Hassidim},
  \bibinfo{year}{2017}.

\bibitem[{Segal-Halevi(2018{\natexlab{b}})}]{SegalHalevi2018CEFAI}
\bibinfo{author}{E.~Segal-Halevi}, \bibinfo{title}{{Competitive Equilibrium For
  almost All Incomes}}, in: \bibinfo{booktitle}{Proceedings of AAMAS'18},
  \bibinfo{note}{arXiv preprint 1705.04212},
  \bibinfo{year}{2018}{\natexlab{b}}.

\bibitem[{Segal-Halevi(2018{\natexlab{a}})}]{segalhalevi2018archipelago}
\bibinfo{author}{E.~Segal-Halevi}, \bibinfo{title}{Fair Division of an
  Archipelago}, \bibinfo{note}{arXiv preprint 1812.08150},
  \bibinfo{year}{2018}{\natexlab{a}}.

\bibitem[{Shishido and Zeng(1999)}]{Shishido1999MarkChooseCut}
\bibinfo{author}{H.~Shishido}, \bibinfo{author}{D.-Z. Zeng},
  \bibinfo{title}{{Mark-Choose-Cut Algorithms For Fair And Strongly Fair
  Division}}, \bibinfo{journal}{Group Decision and Negotiation}
  \bibinfo{volume}{8}~(\bibinfo{number}{2}) (\bibinfo{year}{1999})
  \bibinfo{pages}{125--137},
  \urlprefix\url{http://dx.doi.org/10.1023/a:1008620404353}.

\bibitem[{Steinhaus(1948)}]{Steinhaus1948Problem}
\bibinfo{author}{H.~Steinhaus}, \bibinfo{title}{{The problem of fair
  division}}, \bibinfo{journal}{Econometrica}
  \bibinfo{volume}{16}~(\bibinfo{number}{1}) (\bibinfo{year}{1948})
  \bibinfo{pages}{101--104},
  \urlprefix\url{http://www.jstor.org/stable/1914289}.

\bibitem[{Stromquist and Woodall(1985)}]{Stromquist1985Sets}
\bibinfo{author}{W.~Stromquist}, \bibinfo{author}{D.~R. Woodall},
  \bibinfo{title}{{Sets on which several measures agree}},
  \bibinfo{journal}{Journal of Mathematical Analysis and Applications}
  \bibinfo{volume}{108}~(\bibinfo{number}{1}) (\bibinfo{year}{1985})
  \bibinfo{pages}{241--248},
  \urlprefix\url{http://dx.doi.org/10.1016/0022-247x(85)90021-6}.

\end{thebibliography}
\end{document}